\newtheorem{thm}{Theorem}[section]
\newtheorem{lem}[thm]{Lemma}
\newtheorem{df}[thm]{Definition}
\newtheorem{rem}[thm]{Remark}
\newcommand{\la}{\langle}
\newcommand{\ra}{\rangle}
\newcommand{\Iff}{\Leftrightarrow}
\newcommand{\mc}{\mathcal}
\newcommand{\mf}{\mathfrak}
\newcommand{\inter}{\cap}
\renewcommand{\to}{\rightarrow}
\newcommand{\restrict}{\upharpoonright}
\renewcommand{\and}{\,\&\,}
\newcommand{\eps}{\varepsilon}
\newcommand{\nil}{\varnothing}
\begin{document}
\title{Infinite Subsets of Random Sets of Integers}
\author{Bj\o rn Kjos-Hanssen}

\maketitle
\begin{abstract}
There is an infinite subset of a Martin-L\"of random set of integers that does not compute any Martin-L\"of random set of integers. To prove this, we show that each real of positive effective Hausdorff dimension computes an infinite subset of a Martin-L\"of random set of integers, and apply a result of Miller.
\end{abstract}

\section{Introduction}

In Reverse Mathematics \cite{Simpson}, the Stable Ramsey's Theorem for Pairs (SRT$^2_2$) asserts for any $\Delta^0_2$ definition of a set of integers $A$ (the existence of which may not be provable in SRT$^2_2$) the existence of an infinite subset of $A$ or of the complement of $A$. An open problem is whether SRT$^2_2$ implies Weak K\"onig's Lemma (WKL) or Weak Weak K\"onig's Lemma (WWKL). WWKL asserts the existence of Martin-L\"of random sets of integers. These are sets that satisfy all ``computable'' probability laws for the fair-coin distribution (under which each integer has probability $1/2$ of belonging to the set, independently of any other integer). One way to show SRT$^2_2$ implies WWKL would be to show that  

\begin{quote}
$(*)$ there is some $\Delta^0_2$ Martin-L\"of random set of integers $R$ such that each infinite subset of $R$ or its complement computes a Martin-L\"of random set,
\end{quote} 
and show that a proof of $(*)$ goes through in the base system for Reverse Mathematics, Recursive Comprehension (RCA$_0$). The simplest reason why $(*)$ would be true would be
\begin{quote}
$(**)$ each infinite subset of any Martin-L\"of random set of integers computes a Martin-L\"of random set.
\end{quote} 

The statement $(**)$ seemed fairly plausible for a while. Each Martin-L\"of random set of integers $R$ is effectively immune, hence so is each infinite subset of $R$. This observation was used by Hirschfeldt et al. \cite{HJKLS} to show that SRT$^2_2$ implies the principle Diagonally Non-Recursive Functions (DNR), which asserts the existence of diagonally non-recursive functions. It is clear that WWKL should imply DNR; but the fact that WWKL is strictly stronger than DNR was only shown with considerable effort by Ambos-Spies et al. \cite{AKLS}.

However, the assertion $(**)$ is false, the argument having two steps. In the present paper, we show that each real of positive effective Hausdorff dimension computes an infinite subset of a Martin-L\"of random set, and Miller \cite{M} shows that there is a real of effective Hausdorff dimension 1/2 that computes no Martin-L\"of random set. The truth value of $(*)$ remains unknown.

\begin{rem}
A beam splitter is a frequently used component for random number generators. Two photon detectors labeled $0$ and $1$ are used to detect two possible outcomes corresponding to one of two possible paths a photon can take. Thus each photon entering the beam splitter generates one random bit, $0$ or $1$ depending on where the photon is detected. However, photon detectors generally have an absorption efficiency that is less than $100\%$. Can they still be used to produce a random binary sequence? Our main result can be interpreted as the anwer ``no'', if we merely assume that  in an infinite sequence of photons, at least infinitely many of the incoming photons will be detected.  
\end{rem}

\section{Some probability theory}

The proof of our main result uses random closed sets of reals. Their study in computability theory was begun by Barmpalias et al. \cite{BBCDW}, who studied a different distribution from the one we consider here. 

We sometimes consider an integer $K\in\omega=\{0,1,2,\ldots\}$ to be the set $\{0,\ldots,K-1\}$, and write $K^{<\omega}$ and $K^\omega$ for the sets of finite and infinite strings over $K$, respectively. For a tree $T$, $[T]$ denotes the closed set defined by $T$, the set of all infinite paths through $T$. If $\sigma$ is an initial substring of $\tau$ we write $\sigma\preceq\tau$. The concatenation of $\sigma$ and $\tau$ is denoted $\sigma*\tau$, and the length of $\sigma$ is $|\sigma|$.
 
Let $\mc P$ denote the power set operation. \emph{Unless otherwise stated below we have $K=2^k$ for some integer $k\ge 1$.} $K$ plays the role of an alphabet, and a tree is a set of strings over $K$ that is closed under prefixes.

For a real number $0\le\ell<\infty$, let $\lambda_{k,\ell}$ be the distribution with sample space $\mc P(K^{<\omega})$ such that each string in $K^{<\omega}$ has probability $2^{-\ell}$ of belonging to the random set, independently of any other string. We postulate no relationship between $k$ and $\ell$, but note that Theorem \ref{members} below is non-vacuous only for $\ell<k$; a nice case to keep in mind is $\ell=1$, $k=2$.

\begin{lem}\label{new}
For all strings $\rho$, $\sigma$, $\tau$ in $K^{<\omega}$, if $\rho$ is the longest common prefix of $\sigma$ and $\tau$ with $|\sigma|=|\tau|=n$ and $|\rho|=m$, then   
$$\lambda_{k,\ell}\{S:\sigma\in S\text{ and }\tau\in S\}= 2^{\ell(m-2n)}.$$
\end{lem}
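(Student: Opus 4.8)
The plan is to reduce the joint event to a conjunction of independent single-string membership events and then count. The key observation is that $\sigma$ lies on the tree determined by the random set $S$ exactly when every nonempty prefix $\sigma\restrict i$, for $1\le i\le n$, belongs to $S$, and likewise for $\tau$. Hence $\{S:\sigma\in S\text{ and }\tau\in S\}$ is precisely the event that every string in $P_\sigma\cup P_\tau$ belongs to $S$, where $P_\sigma=\{\sigma\restrict i:1\le i\le n\}$ and $P_\tau=\{\tau\restrict i:1\le i\le n\}$ are the sets of nonempty prefixes of $\sigma$ and $\tau$. Since distinct strings belong to the random set independently, each with probability $2^{-\ell}$, the probability of this event is $2^{-\ell\,\lvert P_\sigma\cup P_\tau\rvert}$, so everything comes down to computing $\lvert P_\sigma\cup P_\tau\rvert$.

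Next I would carry out the count. We have $\lvert P_\sigma\rvert=\lvert P_\tau\rvert=n$, so by inclusion–exclusion $\lvert P_\sigma\cup P_\tau\rvert=2n-\lvert P_\sigma\cap P_\tau\rvert$. I claim $P_\sigma\cap P_\tau=\{\rho\restrict i:1\le i\le m\}$, a set of size $m$: any common prefix of $\sigma$ and $\tau$ is a prefix of their longest common prefix $\rho$, hence lies in this set, and conversely every nonempty prefix of $\rho$ is a prefix of both $\sigma$ and $\tau$. This is the one place where the hypothesis that $\rho$ is the \emph{longest} common prefix is used: it guarantees that for $i>m$ the strings $\sigma\restrict i$ and $\tau\restrict i$ differ (they already differ at position $m+1$), while strings of different lengths are automatically distinct, so no accidental coincidences occur beyond length $m$. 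Thus $\lvert P_\sigma\cap P_\tau\rvert=m$ and $\lvert P_\sigma\cup P_\tau\rvert=2n-m$.

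Combining the two steps gives $\lambda_{k,\ell}\{S:\sigma\in S\text{ and }\tau\in S\}=(2^{-\ell})^{2n-m}=2^{\ell(m-2n)}$, as required. The degenerate cases are a useful sanity check: if $\sigma=\tau$ then $m=n$ and the value is $2^{-\ell n}$, the probability that the single string $\sigma$ survives; if $\rho$ is empty then $m=0$, the prefix sets are disjoint, and the value is $2^{-2\ell n}$. I expect no serious obstacle here: the only step deserving genuine care is the bookkeeping that identifies $P_\sigma\cap P_\tau$ exactly with the nonempty prefixes of $\rho$, and once that overlap count is made airtight the result is immediate from the independence of the underlying coin flips.
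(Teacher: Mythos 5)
Your proof is correct and is essentially the paper's argument: both read the event ``$\sigma\in S$'' (as the paper's own proof makes clear) as requiring every nonempty prefix of $\sigma$ to belong to $S$, and both reduce the computation to the observation that, since $\rho$ is the longest common prefix, exactly $2n-m$ distinct strings must independently lie in $S$, each with probability $2^{-\ell}$. The paper organizes this count by conditioning on the event $\{\rho\in S\}$ and multiplying by the probability $2^{-2\ell(n-m)}$ of the two disjoint chains of extensions, i.e.\ $2^{-\ell m}\cdot 2^{-2\ell(n-m)}$, which is exactly the same arithmetic as your inclusion--exclusion count $|P_\sigma\cup P_\tau|=2n-m$.
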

\begin{proof}
We have 
$$\lambda_{k,\ell}\{\sigma\in S\text{ and }\tau\in S\}=\lambda_{k,\ell}\{\rho\in S\}\cdot \lambda_{k,\ell}\{\sigma\in S\text{ and }\tau\in S \,|\, \rho\in S\}$$
$$=2^{-\ell m} 2^{-2\ell(n-m)} =2^{\ell(m-2n)}.$$
\end{proof}

The idea of the following Definition \ref{iota} is to think of a string in $K^{<\omega}$ where $K=2^k$ as a string in $2^{<\omega}$ of length a multiple of $k$.

\begin{df}\label{iota}
Let $\iota:K\to 2^{<\omega}$ be defined by the condition that for any $a_i\in\{0,1\}$, $0\le i\le k-1$, 
$$\iota\left(\sum_{i=0}^{k-1} a_i 2^i\right)=\la a_0,\ldots,a_{k-1}\ra.$$ 
For example, if $k=2$ then $\iota(3)=\la 1,1\ra$ and $\iota(2)=\la 1,0\ra$. \\ This is extended to a map on strings,
$\iota: K^{<\omega}\to 2^{<\omega}$, 
by concatenation:
$$\iota(\sigma)=\iota(\sigma(0))*\cdots*\iota(\sigma(|\sigma|-1)).$$ 
For example, if $k=2$ then $\iota(\la 3,2\ra)=\la 1,1,1,0\ra$. \\ Finally $\iota$ is extended to a map on sets of strings, 
$\iota:\mc P(K^{<\omega})\to \mc P(2^{<\omega})$, 
by $$\iota(S)=\{\iota(\sigma):\sigma\in S\}.$$
\end{df}

\begin{df}[$(k,\ell)$-induced distribution]\label{notawk}
For $S\subseteq K^{<\omega}$, $\Gamma_S$, the tree determined by $S$, is the (possibly empty) set of infinite paths through the $\iota$-image of the part of $S$ that is downward closed under prefixes: $$\Gamma_S=\left[\iota\left(\{\sigma\in K^{<\omega}: \forall\tau\preceq\sigma\,\,\tau\in S\}\right)\right].$$

The $(k,\ell)$-induced distribution $\mathbb P_{k,\ell}$ on the set of all closed subsets of $2^\omega$ is defined by
$$\mathbb P_{k,\ell}(E)= \lambda_{k,\ell}\{ S: \Gamma_{S}\in E\},$$
Thus, the probability of a property $E$ of a closed subset of $2^\omega$ is the probability according to $\lambda_{k,\ell}$ that the $\iota$-image of a random subset of $K^{<\omega}$ determines a tree whose set of infinite paths has property $E$.
\end{df}

\begin{lem}[Chebychev-Cantelli, a special case of the Paley-Zygmund Inequality]\label{PZ}
Suppose $X$ is a nonnegative random variable, that is $X:S\to [0,\infty]$ for some sample space $S$, with probability distribution $\mathbb P$ on some $\sigma$-algebra $\mc S\subseteq 2^S$ containing the event $\{X>0\}$. Suppose $\mathbb E[X^2]<\infty$, where $\mathbb E$ denotes expected value.

Then
$$\mathbb P\{X>0\}\ge  \frac{\mathbb E[X]^2}{\mathbb E[X^2]}.$$
\end{lem}
\begin{proof}
Since $X\ge 0$, we have $\mathbb E[X]= \mathbb E[X\cdot \mathbf 1_{\{X>0\}}]$ where $\mathbf 1_{E}$ is the characteristic (indicator) function of $E$ and as is customary we abbreviate $\{X:E(X)\}=\{E(X)\}=E$.
Squaring both sides and applying Cauchy-Schwarz yields
$$\mathbb E[X]^2 = \mathbb E[X\cdot\mathbf 1_{\{X>0\}}]^2\le \mathbb E[X^2]\cdot \mathbb E[(\mathbf 1_{\{X>0\}})^2]=\mathbb E[X^2]\cdot\mathbb P\{X>0\}.$$
\end{proof}

Let the ultrametric $\upsilon$ on $2^\omega$ be defined by $\upsilon(x,y)=2^{-\min\{n:x(n)\ne y(n)\}}$. For a Borel probability measure $\mu$ on $2^\omega$, we write $\mu(\sigma)$ for $\mu([\sigma])$.

\begin{lem}\label{Oct29-2008}
If $\beta>\gamma$ and $\mu$ is a Borel probability measure on $2^\omega$ such that for some constant $c_R$, $\mu(\sigma)\le c_R 2^{-|\sigma|\beta}$ for all binary strings $\sigma$, then $$\iint\frac{d\mu(b)d\mu(a)}{\upsilon(a,b)^\gamma}<\infty.$$
\end{lem}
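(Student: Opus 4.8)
The plan is to turn the energy integral into a convergent geometric series by partitioning the product space $2^\omega\times 2^\omega$ according to the length of the longest common prefix of the two arguments. Write $\Delta(a,b)=\min\{n:a(n)\ne b(n)\}$ for the first coordinate at which $a$ and $b$ differ, so that the integrand is $\upsilon(a,b)^{-\gamma}=2^{\gamma\Delta(a,b)}$. This is constant, equal to $2^{\gamma n}$, on each set $\{(a,b):\Delta(a,b)=n\}$, and equals $+\infty$ only on the diagonal $\{a=b\}$.

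First I would dispose of the diagonal. We may assume $\gamma>0$, since otherwise $\upsilon(a,b)^{-\gamma}\le 1$ everywhere and the integral is at most $\mu(2^\omega)^2=1$. Then $\beta>\gamma>0$, and the Frostman-type bound $\mu(\sigma)\le c_R2^{-|\sigma|\beta}$ forces $\mu(\{a\})\le\inf_n c_R2^{-n\beta}=0$ for every $a$, so $\mu$ is non-atomic and $(\mu\times\mu)\{a=b\}=0$. Since the integrand is nonnegative and constant on each piece of the Borel partition $\{a=b\}\sqcup\bigsqcup_{n}\{\Delta(a,b)=n\}$, countable additivity of the integral over this partition gives
$$\iint\frac{d\mu(b)\,d\mu(a)}{\upsilon(a,b)^\gamma}=\sum_{n=0}^\infty 2^{\gamma n}\,(\mu\times\mu)\{(a,b):\Delta(a,b)=n\}.$$

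The heart of the estimate is bounding each term. Two points share a prefix of length $n$ exactly when $\Delta\ge n$, and $\{(a,b):\Delta(a,b)\ge n\}=\bigsqcup_{|\sigma|=n}[\sigma]\times[\sigma]$ is a disjoint union over strings of length $n$; hence
$$(\mu\times\mu)\{\Delta=n\}\le(\mu\times\mu)\{\Delta\ge n\}=\sum_{|\sigma|=n}\mu(\sigma)^2.$$
Applying the hypothesis to only one of the two factors in each summand, and using that the cylinders $[\sigma]$ with $|\sigma|=n$ partition $2^\omega$ so that $\sum_{|\sigma|=n}\mu(\sigma)=1$, I obtain $\sum_{|\sigma|=n}\mu(\sigma)^2\le c_R2^{-n\beta}\sum_{|\sigma|=n}\mu(\sigma)=c_R2^{-n\beta}$.

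Substituting this bound yields $\sum_{n}2^{\gamma n}c_R2^{-n\beta}=c_R\sum_n 2^{n(\gamma-\beta)}$, a geometric series of ratio $2^{\gamma-\beta}<1$ because $\beta>\gamma$, and therefore convergent. I do not anticipate a genuine obstacle: this is a routine Frostman energy estimate. The only steps needing care are verifying that the diagonal is $(\mu\times\mu)$-null and recognizing that applying the measure bound to a single factor, leaving the other to be summed against the trivial identity $\sum_{|\sigma|=n}\mu(\sigma)=1$, is precisely the maneuver that converts the gap $\beta>\gamma$ into summability.
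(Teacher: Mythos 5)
Your proof is correct and takes essentially the same approach as the paper: both decompose the energy integral according to the first coordinate at which $a$ and $b$ disagree, use the Frostman bound $\mu(\sigma)\le c_R 2^{-|\sigma|\beta}$ to show the $n$-th shell has measure $O(2^{-n\beta})$, and sum the resulting geometric series $\sum_n 2^{n(\gamma-\beta)}$, after noting that the bound forces $\mu$ to be non-atomic so the diagonal contributes nothing. The only cosmetic difference is that the paper carries out the decomposition inside the inner integral for fixed $a$, where the $n$-th shell is the single cylinder $[(a\restrict n)*(1-a(n))]$, whereas you work directly with $\mu\times\mu$ on the product space and bound $\sum_{|\sigma|=n}\mu(\sigma)^2$ by applying the hypothesis to one factor.
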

\begin{proof}
Note that $\mu$ is necessarily non-atomic. Hence we have
$$\phi_\gamma(a):=\int\frac{d\mu(b)}{\upsilon(a,b)^\gamma}\le\sum_{n=0}^\infty 2^{n\gamma} \mu\{b:\upsilon(a,b)=2^{-n}\}=\sum_{n=0}^\infty 2^{n\gamma} \mu[(a\restrict n) * (1-a(n))]$$ 
$$\le c_R \sum_{n=0}^\infty 2^{n\gamma} 2^{-(n+1)\beta}=c_R 2^{-\beta} \sum_{n=0}^\infty 2^{n(\gamma-\beta)}= c_R \frac{2^{-\beta}}{1-2^{\gamma-\beta}}=\frac{c_R}{2^\beta-2^\gamma}.$$
Thus
$$\iint\frac{d\mu(b)d\mu(a)}{\upsilon(a,b)^\gamma}= \int \phi_\gamma(a)d\mu(a)\le  \int \frac{ c_R d\mu(a)}{2^\beta-2^\gamma} =  \frac{ c_R \mu(2^\omega)}{2^\beta-2^\gamma} = \frac{ c_R}{2^\beta-2^\gamma}<\infty.$$

\end{proof}

\begin{lem}\label{oslmuc}
Suppose we are given $\gamma=\frac{\ell}{k}$. Let $\mu$ be any Borel probability measure on $2^\omega$ such that 
$$\iint\frac{d\mu(b)d\mu(a)}{\upsilon(a,b)^\gamma}=c<\infty.$$

Let $\Gamma$ be the (closed set valued) random variable whose value is the outcome of an experiment obeying distribution $\mathbb P_{k,\ell}$, the $(k,\ell)$-induced distribution. For any closed set $A$, we have
$$\mathbb P_{k,\ell}\{\Gamma:\Gamma\inter A\ne\nil\}\ge\frac{\mu(A)^2}{c}.$$
\end{lem}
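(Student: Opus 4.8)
The plan is to apply the Paley--Zygmund inequality (Lemma~\ref{PZ}) to a family of nonnegative random variables $X_n$ on the probability space $(\mc P(K^{<\omega}),\lambda_{k,\ell})$, chosen so that $\{X_n>0\}$ records that the random tree $\Gamma_S$ reaches level $n$ while staying inside $A$, and so that the limiting event $\Inter_n\{X_n>0\}$ forces $\Gamma\inter A\ne\nil$. For $\sigma\in K^n$, say $\sigma$ \emph{survives} if every prefix of $\sigma$ belongs to $S$; by the convention underlying Lemma~\ref{new} this event has probability $2^{-\ell n}$, and a branch $a\in 2^\omega$ lies in $\Gamma_S$ precisely when all of its $K$-ary prefixes survive. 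I set
$$X_n=2^{\ell n}\sum_{\sigma\in K^n}\mb 1[\sigma\text{ survives}]\cdot\mu\bigl(A\inter[\iota(\sigma)]\bigr),$$
the factor $2^{\ell n}$ being arranged so that, since the cylinders $[\iota(\sigma)]$ for $\sigma\in K^n$ partition $2^\omega$,
$$\mathbb E[X_n]=\sum_{\sigma\in K^n}\mu\bigl(A\inter[\iota(\sigma)]\bigr)=\mu(A)\qquad\text{for every }n.$$

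The crux is the second moment. Expanding $X_n^2$ and invoking Lemma~\ref{new}, which gives $\lambda_{k,\ell}\{\sigma\text{ and }\tau\text{ both survive}\}=2^{\ell(m-2n)}$ when $\sigma,\tau\in K^n$ have longest common prefix of length $m$ (with $m=n$ in the diagonal case $\sigma=\tau$), the normalizing factors cancel and I obtain
$$\mathbb E[X_n^2]=\sum_{\sigma,\tau\in K^n}2^{\ell m_{\sigma\tau}}\,\mu\bigl(A\inter[\iota(\sigma)]\bigr)\,\mu\bigl(A\inter[\iota(\tau)]\bigr),$$
where $m_{\sigma\tau}$ is the length of the longest common prefix of $\sigma$ and $\tau$. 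The single point where the hypothesis $\gamma=\ell/k$ enters is the geometric estimate that for all $a\in[\iota(\sigma)]$ and $b\in[\iota(\tau)]$ one has $2^{\ell m_{\sigma\tau}}\le\upsilon(a,b)^{-\gamma}$: such $a,b$ first disagree at some binary position $j\ge k\,m_{\sigma\tau}$ (when $\sigma=\tau$ the common cylinder already forces $j\ge kn=km_{\sigma\tau}$), so $\upsilon(a,b)^{-\gamma}=2^{j\ell/k}\ge 2^{\ell m_{\sigma\tau}}$. Integrating this inequality against $\mu\times\mu$ over $(A\inter[\iota(\sigma)])\times(A\inter[\iota(\tau)])$ and summing over the partition of $A\times A$ into these product blocks yields
$$\mathbb E[X_n^2]\le\iint_{A\times A}\frac{d\mu(b)\,d\mu(a)}{\upsilon(a,b)^\gamma}\le c\qquad\text{for every }n.$$
Lemma~\ref{PZ} then gives $\lambda_{k,\ell}\{X_n>0\}\ge\mathbb E[X_n]^2/\mathbb E[X_n^2]\ge\mu(A)^2/c$ uniformly in $n$.

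It remains to pass from the finite levels to an actual path, which I expect to be the step requiring the most care, the probabilistic content being already contained in the second-moment bound. If $\sigma\in K^{n+1}$ survives and $\mu(A\inter[\iota(\sigma)])>0$, then so does its length-$n$ prefix, so the events $\{X_n>0\}$ decrease with $n$ and
$$\lambda_{k,\ell}\Bigl(\Inter_n\{X_n>0\}\Bigr)=\lim_n\lambda_{k,\ell}\{X_n>0\}\ge\frac{\mu(A)^2}{c}.$$
On the event $\Inter_n\{X_n>0\}$ the collection of surviving $\sigma$ with $\mu(A\inter[\iota(\sigma)])>0$ is a nonempty, $K$-branching, prefix-closed tree with nodes at every level, so by K\"onig's lemma it has an infinite branch $a$; every $K$-ary prefix of $a$ survives, whence $a\in\Gamma_S$, while the nested nonempty compact sets $A\inter[\iota(\sigma)]$, over the $K$-ary prefixes $\sigma$ of $a$, have a common point, necessarily $a$, so $a\in A$. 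Thus $\Inter_n\{X_n>0\}\sub\{\Gamma\inter A\ne\nil\}$, and the claimed bound $\mathbb P_{k,\ell}\{\Gamma:\Gamma\inter A\ne\nil\}\ge\mu(A)^2/c$ follows.
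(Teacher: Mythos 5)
Your proof is correct and is essentially the paper's own argument: the same second-moment method, applying Paley--Zygmund (Lemma \ref{PZ}) to a $\mu$-weighted count of surviving strings at each level, bounding the second moment by the energy integral via Lemma \ref{new} together with the ultrametric estimate $2^{\ell m_{\sigma\tau}}\le\upsilon(a,b)^{-\gamma}$, and then passing to the limit by monotonicity of the events $\{X_n>0\}$ and compactness. The only differences are cosmetic: you weight cylinders by $\mu(A\inter[\iota(\sigma)])$ where the paper uses $\mu(\sigma)$ summed over cylinders meeting $A$ (so your first moment is exactly $\mu(A)$ rather than $\ge\mu(A)$), and you spell out the K\"onig's-lemma/nested-compact-sets step that the paper compresses into its final sentence.
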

\begin{proof}
Let $\mf C_n=\{0,1\}^n$, the set of all binary strings of length $n$. Let $S$ denote the random variable that is the outcome of the experiment according to the distribution $\lambda_{k,\ell}$; so $S$ takes values in $\mc P(K^{<\omega})$. Let $n$ be a positive integer that is a multiple of $k$, let $\mf S_n=\{\sigma: |\sigma|=n\text{ and }\iota^{-1}(\sigma)\in S\}$, and $$Y_n=\sum_{\sigma\in\mf S_n, [\sigma]\inter A\ne\nil} \frac{\mu(\sigma)}{2^{-|\sigma|\gamma}}=\sum_{\sigma\in\mf C_n,   [\sigma]\inter A\ne\nil } \mu(\sigma) 2^{n\gamma} \mathbf 1_{\{\sigma\in\mf S_n\}}.$$

Let us write $\mathbb P$ for $\lambda_{k,\ell}$. Note that $\mathbb E[\mathbf 1_{\{\sigma\in\mf S_n\}}]=\mathbb P\{\sigma\in\mf S_n\}=(2^{-\ell})^{n/k} = 2^{-n\gamma}$.

Thus by linearity of expectation $\mathbb E$,
$$\mathbb E[Y_n]=\sum_{\sigma\in\mf C_n,  [\sigma]\inter A\ne\nil}\mu(\sigma)2^{n\gamma}\mathbb E[\mathbf 1_{\{\sigma\in\mf S_n\}}]=\sum_{\sigma\in\mf C_n,  [\sigma]\inter A\ne\nil}\mu(\sigma)\ge\mu(A).$$

Next, $$\mathbb E[Y^2_n]=\sum_{\sigma\in\mf C_n,  [\sigma]\inter A\ne\nil}\sum_{\tau\in\mf C_n,  [\tau]\inter A\ne\nil}\mu(\sigma)\mu(\tau) \,2^{2n\gamma}\, \mathbb P\{\sigma\in\mf S_n\text{ and }\tau\in\mf S_n\},$$
because $\mathbb E[\mathbf 1_{\{\sigma\in\mf S_n\}}\mathbf 1_{\{\tau\in\mf S_n\}}]=\mathbb P\{\sigma\in\mf S_n\text{ and }\tau\in\mf S_n\}$.

Let $\sigma$ and $\tau$ be binary strings and let $m_{\sigma,\tau}$ be the length of the common prefix of $\sigma$ and $\tau$, and let $m'_{\sigma,\tau}=\max\{kq: kq\le m_{\sigma,\tau}, q\in\omega\}$, which is the length of $\iota$ of the common prefix of $\iota^{-1}(\sigma)$ and $\iota^{-1}(\tau)$. Let $\widehat n ={n}/{k}$, and $\widehat m_{\sigma,\tau}=m'_{\sigma,\tau}/k$, which is the length of the common prefix of $\iota^{-1}(\sigma)$ and $\iota^{-1}(\tau)$. 

Note that $\iota^{-1}(\sigma)$ is a $K$-ary string of length $n/k$. By Lemma \ref{new}, $$\mathbb P\{\sigma\in\mf S_n\text{ and }\tau\in\mf S_n\} =\lambda_{k,\ell}(\{S: \iota^{-1}(\sigma)\in S\text{ and }\iota^{-1}(\tau)\in S\}) $$ 
$$\le 2^{\ell(\widehat m_{\sigma,\tau} - 2\widehat n)}= 2^{\gamma (m'_{\sigma,\tau}-2n)}\le 2^{\gamma (m_{\sigma,\tau}-2n)}.$$

Note that if $x\in [\sigma]$ and $y\in [\tau]$ then $\upsilon(x,y)\le 2^{-m_{\sigma,\tau}}$. Therefore
$$\mathbb E[Y^2_n]\le  \sum_{\sigma\in\mf C_n,  [\sigma]\inter A\ne\nil}\sum_{\tau\in\mf C_n,  [\sigma]\inter A\ne\nil}\mu(\sigma)\mu(\tau)\,2^{\gamma m_{\sigma,\tau}}$$
$$\le  \sum_{\sigma\in\mf C_n}\sum_{\tau\in\mf C_n}\mu(\sigma)\mu(\tau)\,2^{\gamma m_{\sigma,\tau}}\le  \iint \frac{d\mu(x)\,d\mu(y)}{\upsilon(x,y)^\gamma}=c.$$

By Lemma \ref{PZ},
$$\mathbb P\{Y_n>0\}\ge\frac{\mathbb E[Y_n]^2}{\mathbb E[Y^2_n]}\ge \frac{\mu(A)^2}{c}.$$

Since $A$ is closed, and since $Y_{n+k}>0$ implies $Y_{n}>0$, we can conclude (letting $n$ still range over multiples of $k$)

$$\mathbb P_{k,\ell}\{A\inter\Gamma\ne\nil\}\ge\mathbb P\{Y_n>0\, \text{ for all } n\} = \lim_{n\to\infty}\mathbb P\{Y_n>0\}\ge \frac{\mu(A)^2}{c}.$$
\end{proof}

A result similar to Lemma \ref{oslmuc}, but for percolation limit sets rather than for the $(k,\ell)$-induced distribution, was obtained by Lyons \cite{Lyons} building on work of Hawkes \cite{Hawkes}.

\section{Martin-L\"of random sets}\label{secBack}

For a real number $0\le\gamma\le 1$, the $\gamma$-weight $\mathrm{wt}_\gamma(C)$ of a set of strings $C$ is defined by
$$\mathrm{wt}_\gamma(C)=\sum_{w\in C} 2^{-|w|\gamma}.$$
A \emph{Martin-L\"of $\gamma$-test} is a uniformly computably enumerable (c.e.) sequence $(U_n)_{n<\omega}$ of sets of strings such that 
$$(\forall n)( \mathrm{wt}_\gamma(U_n)\le 2^{-n}).$$ 

For a set of strings $V$, let $[V]^\preceq=\bigcup\{[\sigma]:\sigma\in V\}$ be the open subset of $2^\omega$ defined by $V$. A real is $\gamma$-random if it does not belong to $\inter_n [U_n]^\preceq$ for any $\gamma$-test $(U_n)_{n<\omega}$. If $\gamma=1$ we simply say that the real, or the set of integers $\{n:x(n)=1\}$, is \emph{Martin-L\"of random}. 

For a (Borel) probability measure $\mu$ and a real $x$, we say that $x$ is \emph{$\mu$-random} if for each sequence $(U_n)_{n<\omega}$ that is uniformly c.e. in $\mu$ and where $\mu [U_n]^\preceq\le 2^{-n}$ for all $n$, we have $x\not\in \inter_n [U_n]^\preceq$. (Note that $\mu$ can be considered as an oracle via an encoding of the reals $\mu([\sigma])$, $\sigma\in 2^{<\omega}$; this definition is due to Reimann and Slaman.)

We say $x$ is \emph{$\gamma$-capacitable} if $x$ is $\mu$-random with respect to some probability measure $\mu$ such that for some $c$,
$$\forall\sigma\,[\mu(\sigma)\le c 2^{-\gamma |\sigma|}].$$

$x$ is \emph{$\gamma$-energy random} if $x$ is $\mu$-random with respect to some probability measure $\mu$ such that 
$$\iint\frac{d\mu(b)d\mu(a)}{\upsilon(a,b)^\gamma}<\infty.$$
If we only require that $x\not\in\bigcap_n [U_n]^\preceq$ when $(U_n)_{n<\omega}$ is c.e., as opposed to c.e. relative to $\mu$, then we say that $x$ is \emph{Hippocrates} $\mu$-random, $\gamma$-capacitable, or $\gamma$-energy random, respectively. As the reader may recall, Hippocrates did not consult the oracle of Delphi, and similarly a test for Hippocrates randomness $U_n$ may not consult the oracle $\mu$.

Effective Hausdorff dimension was introduced by Lutz \cite{Lutz} and is a notion of partial randomness. For example, if the sequence $x_0x_1x_2\cdots$ is Martin-L\"of random, then the sequence $x_00x_10x_20\cdots$ has effective Hausdorff dimension equal to $\frac{1}{2}$. Let $\text{dim}^1_H x$ denote the effective (or constructive) Hausdorff dimension of $x$; then we have $\text{dim}^1_H(x)=\sup\{\gamma: x\text{ is $\gamma$-random}\}$ (Reimann and Stephan \cite{RS01}).

 \begin{thm}[Reimann \cite{Reimann}] \label{Reimann}
 For any real $x\in 2^\omega$,
 $$\dim^1_H x = \sup \{\beta : x\text{ is $\beta$-capacitable}\},$$
 where $\sup \nil = 0$.
 \end{thm}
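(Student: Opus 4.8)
The plan is to prove the two inequalities separately; write $D=\sup\{\beta:x\text{ is }\beta\text{-capacitable}\}$. First note that $\gamma$-randomness is downward monotone: if $\gamma\le\gamma'$ then $\mathrm{wt}_{\gamma'}(C)\le\mathrm{wt}_\gamma(C)$ for every $C$, so every $\gamma$-test is a $\gamma'$-test and $\gamma'$-randomness implies $\gamma$-randomness. For $D\le\dim^1_H x$ I would argue by a direct test translation. Suppose $x$ is $\beta$-capacitable via $\mu$ with $\mu(\sigma)\le c\,2^{-\beta|\sigma|}$, and let $\gamma\le\beta$. For any plain $\gamma$-test $(U_n)$ the Frostman bound gives $\mu[U_n]^\preceq\le\sum_{w\in U_n}\mu([w])\le c\sum_{w\in U_n}2^{-\beta|w|}\le c\,\mathrm{wt}_\gamma(U_n)\le c\,2^{-n}$. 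Shifting the index by $\lceil\log_2 c\rceil$ converts $(U_n)$ into a genuine $\mu$-test, which is c.e. in $\mu$ because it is already plainly c.e.; as $x$ is $\mu$-random it escapes this test, and since the shifted intersection contains the original one it also escapes $(U_n)$. Thus $x$ is $\gamma$-random for all $\gamma\le\beta$, so $\dim^1_H x\ge\beta$, and taking the supremum over capacitable $\beta$ gives $D\le\dim^1_H x$.

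For the reverse inequality, fix $\gamma<\dim^1_H x$, so $x$ is $\gamma$-random and escapes the universal $\gamma$-test at some level $N\ge 1$, putting $x$ in the $\Pi^0_1$ class $P=2^\omega\setminus[U_N]^\preceq$. I would build a Frostman measure on $P$ by the max-flow--min-cut (tree-capacity) method of Lyons \cite{Lyons} and Hawkes \cite{Hawkes} already invoked for Lemma \ref{oslmuc}, giving each node $\sigma$ of the tree $T$ of $P$ the capacity $2^{-\gamma|\sigma|}$. Any antichain $C\subseteq T$ meeting every infinite path of $P$ is a prefix-free cover of $P$, so $C\cup U_N$ covers $2^\omega$; since every cover of $2^\omega$ has $\gamma$-weight at least $1$ (pass to the minimal strings and use $2^{-\gamma|w|}\ge 2^{-|w|}$ together with Kraft's inequality), we get $\mathrm{wt}_\gamma(C)\ge 1-2^{-N}\ge\tfrac12$. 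Hence every cut has capacity at least $\tfrac12$, the maximal flow into $P$ has value at least $\tfrac12$, and normalizing it yields a probability measure $\mu$ supported on $P$ with $\mu(\sigma)\le 2\cdot 2^{-\gamma|\sigma|}$. This is exactly the capacitability decay bound with $c=2$ and $\beta=\gamma$.

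The crux, and the step I expect to be hardest, is to upgrade ``$x\in\mathrm{supp}(\mu)=P$'' to ``$x$ is $\mu$-random''. This does not follow from the Frostman bound alone: that bound controls $\mu$-measure by $\gamma$-weight but not conversely, so a $\mu$-null set capturing $x$ need not be $\gamma$-Hausdorff null, and a generic Frostman measure makes almost no prescribed point random. Worse, $T$ is only co-c.e., so $\mu\le_T\emptyset'$ and $\mu$-relative tests are a priori stronger than the plain $\gamma$-tests that $x$ is known to avoid. The plan is to exploit that the maximal flow saturates the minimal cut and so carries no slack along $x$: given a $\mu$-test $(V_n)$ capturing $x$ with $\mu[V_n]^\preceq\le 2^{-n}$, one refines each relevant node into a prefix-free family whose $\gamma$-weight is bounded by the flow it transmits, using only the c.e. enumeration of $U_N$ rather than the full measure $\mu$, thereby manufacturing a plain $\gamma$-test that still captures $x$ and contradicting its $\gamma$-randomness. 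Carrying out this refinement --- in particular eliminating the apparent dependence on the noncomputable $\mu$ and verifying the weight bookkeeping --- is the heart of the proof; once it is in place, $\gamma<\dim^1_H x$ is arbitrary and $\dim^1_H x\le D$ follows.
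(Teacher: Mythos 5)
The paper itself does not prove this statement---it is quoted from Reimann \cite{Reimann}---so the benchmark is Reimann's own argument. Your first inequality is correct and complete: a Frostman bound $\mu(\sigma)\le c\,2^{-\beta|\sigma|}$ turns any plain $\gamma$-test with $\gamma\le\beta$ into a $\mu$-test after an index shift, and the Reimann--Stephan characterization $\dim^1_H(x)=\sup\{\gamma: x\text{ is $\gamma$-random}\}$ quoted in Section \ref{secBack} finishes that direction.

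The reverse inequality is where the theorem actually lives, and there your outline has a genuine gap---one you partly acknowledge, but which is worse than ``hard bookkeeping.'' Two concrete failures. First, $x\in\operatorname{supp}(\mu)$ is already unjustified: max-flow/min-cut gives a flow of value at least $\tfrac12$ respecting the capacities $2^{-\gamma|\sigma|}$, but nothing forces that flow to route mass through the nodes $x\restrict m$. If it does not, then $\mu[x\restrict m]=0$ for some $m$, and $x$ is captured by the $\mu$-test $V_n=\{\sigma: |\sigma|=m,\ \mu(\sigma)<2^{-n-m}\}$, so $x$ is not $\mu$-random and the construction collapses. Second, and fatally for the proposed repair, no procedure can convert an arbitrary $\mu$-test capturing $x$ into a plain $\gamma$-test capturing $x$ for every max-flow measure $\mu$ on $P$. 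The tree of $P$ is only co-c.e., so the encoding of $\mu$ is in general noncomputable, and nothing in the construction prevents it from computing $x$ itself; note that $x$ can perfectly well be $\Delta^0_2$ (a dilution of $\Omega$ has positive dimension) while $\mu$ is built from the $\emptyset'$-computable tree of $P$. If the encoding of $\mu$ computes $x$, then, since the Frostman bound makes $\mu$ non-atomic, the sets $V_n=\{x\restrict m_n\}$ with $2\cdot 2^{-\gamma m_n}\le 2^{-n}$ form a $\mu$-test capturing $x$; yet no plain $\gamma$-test captures $x$, because $x$ is $\gamma$-random. So for such $\mu$ your ``refinement'' would manufacture something that cannot exist. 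The conclusion is that the measure cannot be \emph{any} max flow on \emph{one} $\Pi^0_1$ class: it must be chosen so that, in particular, it does not compute $x$, and securing this is exactly the hard content of the theorem. Reimann's proof does this by working with the effectively compact class of \emph{all} measures satisfying the Frostman condition, defining Levin-style uniform tests for that class, proving an effective Frostman lemma at the level of the class (this is where your max-flow/min-cut computation does legitimately appear), and then extracting by a compactness argument a single measure in the class for which $x$ is random in the Reimann--Slaman representation sense. Machinery of that kind---quantifying over measures rather than fixing one---is what is missing from your plan; it is also telling that the present paper, in Theorem \ref{members}, deliberately retreats to Hippocrates randomness (plain c.e.\ tests) precisely because oracle tests are what make full $\mu$-randomness hard to certify.
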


\begin{lem}\label{ce}
Suppose $U$ is a c.e. set of strings with effective enumeration $U=\bigcup_{s<\omega} U_s$. Then 
$$\{y\in 2^\omega: \{S: y\in\Gamma_{S}\} \subseteq [U]^\preceq\}$$ 
is a $\Sigma^0_1$ class.
\end{lem}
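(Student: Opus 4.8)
The plan is to turn the defining condition into a decidable property of a finite initial segment of $y$, exploiting compactness of $\mc P(K^{<\omega})$. I first fix a computable enumeration $w_0,w_1,\ldots$ of $K^{<\omega}$, which identifies $\mc P(K^{<\omega})$ with Cantor space; under this identification $U$ is a c.e. set of finite conditions (binary strings $s$ specifying membership of $w_0,\ldots,w_{|s|-1}$ in $S$) and $[U]^\preceq=\bigcup_{s\in U}[s]$ is the associated effectively open subset of $\mc P(K^{<\omega})$, exactly as in the paper's notation $[V]^\preceq$ applied to this copy of $2^\omega$.

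The first step is to describe the event $\{S:y\in\Gamma_S\}$ concretely. For $j\ge 0$ let $\hat y_j=\iota^{-1}(y\restrict jk)\in K^{<\omega}$ be the length-$j$ $K$-ary string whose $\iota$-image is the length-$jk$ prefix of $y$, and set $P_y=\{\hat y_j:j\ge 0\}$. Unwinding Definition \ref{notawk}, and using that $\iota$ is injective, $y\in\Gamma_S$ holds iff $\hat y_j$ lies in the pruned tree $\{\sigma:\forall\tau\preceq\sigma\,\,\tau\in S\}$ for every $j$; since the $\hat y_j$ form the $\preceq$-increasing chain $\hat y_0\preceq\hat y_1\preceq\cdots$ of prefixes, this is equivalent to $\hat y_j\in S$ for all $j$. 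Hence
$$\{S:y\in\Gamma_S\}=\{S:P_y\subseteq S\}=\bigcap_{j}\{S:\hat y_j\in S\}=:C_y,$$
an intersection of clopen sets, so $C_y$ is a closed, hence compact, subset of $\mc P(K^{<\omega})$.

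The main step is a compactness reduction. Because $C_y$ is compact and $[U]^\preceq$ is open, $C_y\subseteq[U]^\preceq$ holds iff a finite subfamily of the cylinders $[s]$, $s\in U$, already covers $C_y$, and such a finite subfamily is contained in $U_t$ for some $t$; thus $C_y\subseteq[U]^\preceq$ iff $C_y\subseteq[U_t]^\preceq$ for some stage $t$. Fix $t$ and choose $m$ with $|s|\le m$ for all $s\in U_t$, so that membership in $[U_t]^\preceq$ depends only on the coordinates $w_0,\ldots,w_{m-1}$. The patterns $p\in 2^m$ realized by the sets $S\in C_y$ are exactly those with $p(i)=1$ for every $i<m$ such that $w_i\in P_y$ (the remaining coordinates being free); hence $C_y\subseteq[U_t]^\preceq$ holds precisely when every such pattern $p$ extends some $s\in U_t$. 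Since $w_i\in P_y$ iff $\iota(w_i)=y\restrict k|w_i|$, each of these finitely many memberships is decided by a finite prefix of $y$, so the whole condition ``$C_y\subseteq[U_t]^\preceq$'' is decidable from $y\restrict\big(k\max_{i<m}|w_i|\big)$ together with $U_t$.

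It remains to assemble the $\Sigma^0_1$ description. For each $t$ and each binary string $\sigma$ long enough to settle the memberships $w_i\in P_y$ for $i<m$, we effectively test whether $\sigma$ forces $C_y\subseteq[U_t]^\preceq$, and we enumerate $[\sigma]$ into our open set whenever it does. Running this over all $t$ yields a c.e. union of cylinders that, by the compactness reduction, equals $\{y:C_y\subseteq[U]^\preceq\}$; this class is therefore $\Sigma^0_1$, uniformly in an index for $U$. The step I expect to be most delicate is the compactness reduction: one must check that the inclusion genuinely localizes both to a finite enumeration stage and to a finite window of coordinates, and that within that window the realized patterns are described purely by the forced coordinates $w_i\in P_y$. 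The identification $\{S:y\in\Gamma_S\}=\{S:P_y\subseteq S\}$ is the conceptual heart, after which compactness does the rest.
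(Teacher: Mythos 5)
Your proof is correct, and it rests on the same underlying principle as the paper's proof---compactness of the Cantor space of candidate sets $S$---but the execution is genuinely different. The paper never describes the event $\{S: y\in\Gamma_S\}$ explicitly: it rewrites the inclusion as $\forall S\,(\{e\}^{S\oplus y}(0)\downarrow)$ for a suitable oracle Turing functional (the matrix ``$\exists m\,(y\restrict m\notin\Gamma_S\restrict m)$ or $\exists s\,(S\in[U_s]^\preceq)$'' being $\Sigma^0_1$ in $S\oplus y$), and then quotes the standard quantifier exchange: an oracle computation halts for all oracles iff at some stage $s$ it halts, within $s$ steps, on all oracle strings of length $s$, and the latter is a $\Sigma^0_1$ condition on $y$. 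You instead make everything concrete: the identification $\{S:y\in\Gamma_S\}=\{S:P_y\subseteq S\}$ (valid precisely because the tree in Definition \ref{notawk} is pruned, so only membership of the decoded prefixes $\hat y_j$ matters), compactness to localize the inclusion to a single stage $U_t$, and a finite pattern check inside the coordinate window $w_0,\dots,w_{m-1}$. What your route buys is elementarity and transparency: no Turing functionals, a completely explicit finite verification, and the uniformity in an index for $U$ (which Theorem \ref{members} needs) is immediate from the construction. What the paper's route buys is brevity and robustness: it uses only that $\{S:y\in\Gamma_S\}$ is uniformly $\Pi^0_1$ in $y$, never its explicit form, so it sidesteps the pattern combinatorics that you correctly flagged as the delicate step. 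Two minor points you should make explicit if you write this up: the step ``some finite subcover lies in a single $U_t$'' uses the standard convention that the enumeration $(U_t)_{t<\omega}$ is monotone (the paper relies on the same convention), and the clause defining which cylinders $[\sigma]$ get enumerated must require $\sigma$ to be long enough to decide $w_i\in P_y$ for all $i<m$, as you indeed state.
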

\begin{proof}
For a closed set $\Gamma$, let us write $\Gamma\restrict m$ for $\bigcup\{[\sigma]: |\sigma|=m,\,\,[\sigma]\inter\Gamma\ne\nil\}$. As is usual, for an oracle $A$, a Turing reduction $\{e\}$ and a stage $s$, let $\{e\}^A_s(n)$ denote the output if any of the computation on input $n$ by stage $s$, and $\{e\}^A(n)$ the value by the time the computation halts, if ever.
For some $e$, we have
$$\{y\in 2^\omega: \{S: y\in\Gamma_{S}\} \subseteq [U]^\preceq\}=\{y:\forall S(y\in\Gamma_{S}\to S\in [U]^\preceq)\}$$
$$=\{y:\forall S(\forall m(y\restrict m\in\Gamma_{S}\restrict m)\to S\in [U]^\preceq\}$$
$$=\{y:\forall S\exists m,s(y\restrict m\not\in\Gamma_{S}\restrict m\text{ or } S\in [U_{s}]^\preceq)\}$$
$$=\{y:\forall S(\{e\}^{S\oplus y}(0)\downarrow)\}=\{y:\exists s \forall\sigma\in 2^s(\{e\}^{\sigma\oplus y\restrict s}_s(0)\downarrow)\}.$$
\end{proof}

\begin{df} \label{Oct29Eve}
Let $f_k:\omega\to K^{<\omega}$ be an effective bijection. A closed set is called Martin-L\"of random according to the $(k,\ell)$-induced distribution if it is of the form $\Gamma_{S}$ for some set $S$ such that $f_k^{-1}(S)$ is a Martin-L\"of random subset of $\omega$ with respect to Bernoulli measure with parameter $2^{-\ell}$. 
\end{df}

For $\ell=1$, Definition \ref{Oct29Eve} states that $f_k^{-1}(S)$ is a Martin-L\"of random subset of $\omega$ as defined above. Although we will not need it, one can show that Definition \ref{Oct29Eve} is independent of the choice of $f_k$.

\begin{lem}\label{Jan1-08}
Let $\mu$ be any Borel probability measure on $2^\omega$.
Then for each open set $U$, there is a clopen set $A\subseteq U$ with $\mu(A)\ge \mu(U)-\epsilon$.
\end{lem}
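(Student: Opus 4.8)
The plan is to exploit the special structure of open sets in the Cantor space $2^\omega$, where every basic cylinder $[\sigma]$, $\sigma\in 2^{<\omega}$, is simultaneously open and closed, and every \emph{clopen} set is a \emph{finite} union of cylinders. So it suffices to find finitely many cylinders inside $U$ that capture all but $\epsilon$ of the measure of $U$.

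First I would write $U$ as a disjoint union of cylinders. Let $V$ be the set of all strings $\sigma\in 2^{<\omega}$ that are minimal under $\preceq$ with the property $[\sigma]\sub U$; that is, $[\sigma]\sub U$ but $[\sigma\restrict m]\not\sub U$ for every $m<|\sigma|$. Then $V$ is an antichain under $\preceq$, so the cylinders $\{[\sigma]:\sigma\in V\}$ are pairwise disjoint, and I would check that $U=\Union_{\sigma\in V}[\sigma]=[V]^\preceq$. Indeed, each $[\sigma]$ with $\sigma\in V$ lies in $U$ by construction; conversely, every $x\in U$ lies in some cylinder contained in $U$ because $U$ is open, and taking the shortest initial segment $x\restrict n$ with $[x\restrict n]\sub U$ places $x$ in the cylinder indexed by $x\restrict n\in V$.

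Next I would apply countable additivity. Since $V\sub 2^{<\omega}$ is countable, enumerate it without repetition as $\sigma_0,\sigma_1,\ldots$. If $V$ is finite then $U$ is already clopen and I take $A=U$. Otherwise, by disjointness and countable additivity, $\mu(U)=\sum_{i=0}^\infty\mu([\sigma_i])$, a convergent series whose sum is at most $\mu(2^\omega)=1$. Hence its tail tends to $0$, so I can fix $N$ with $\sum_{i>N}\mu([\sigma_i])\le\epsilon$, and set $A=\Union_{i=0}^N[\sigma_i]$. As a finite union of cylinders, $A$ is clopen; clearly $A\sub U$; and $\mu(A)=\sum_{i=0}^N\mu([\sigma_i])=\mu(U)-\sum_{i>N}\mu([\sigma_i])\ge\mu(U)-\epsilon$, as required.

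There is no serious obstacle here: the one point demanding a moment's care is the verification that $U=[V]^\preceq$ with the cylinders pairwise disjoint, since this is what reduces the problem to truncating a convergent nonnegative series. Once that decomposition is in hand, the conclusion is immediate, and the argument uses nothing beyond countable additivity and the fact that finite unions of cylinders are clopen in $2^\omega$.
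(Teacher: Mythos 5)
Your proof is correct and takes essentially the same route as the paper's: decompose $U$ into countably many pairwise disjoint clopen sets, then use countable additivity to extract a finite subunion capturing all but $\epsilon$ of the measure. The only difference is that you explicitly construct the decomposition via minimal cylinders (an antichain of strings), a detail the paper dispatches with the phrase ``these $D_n$ exist by topological properties of $2^\omega$.''
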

\begin{proof}
Write $U=\bigcup_n D_n$ where the $D_n$ are clopen and disjoint. These $D_n$ exist by topological properties of $2^\omega$.
Then $\infty>1\ge \mu U=\mu \bigcup_n D_n = \sum_n \mu(D_n)$.  Choose $n$ such that $\sum_{k>n}\mu(D_k)\le \epsilon$ and let $A=D_0\cup\cdots\cup D_n$. Then $\mu A=\mu(U)- \sum_{k>n}\mu(D_k)\ge \mu(U)-\epsilon$.
\end{proof}

\begin{thm}\label{members}
Let $\gamma=\frac{\ell}{k}$. Each Hippocrates $\gamma$-energy random real belongs to a Martin-L\"of random closed set under the $(k,\ell)$-induced distribution. 
\end{thm}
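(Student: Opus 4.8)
The plan is to argue by contraposition at the level of tests: I will show that if $x$ lies in \emph{no} Martin-L\"of random closed set, then $x$ is captured by a Hippocrates $\mu$-test, where $\mu$ is the measure witnessing that $x$ is $\gamma$-energy random. Fix such a $\mu$, so that $\iint \upsilon(a,b)^{-\gamma}\,d\mu(b)\,d\mu(a)=c<\infty$; note $c\ge 1$ since $\upsilon\le 1$. Let $(\mathcal U_n)_{n<\omega}$ be a universal Martin-L\"of test for the Bernoulli measure with parameter $2^{-\ell}$ on the space of sets $S$ (identified with $2^\omega$ via $f_k$), chosen nested so that $\mathcal U_{n+1}\sub\mathcal U_n$, with $\lambda_{k,\ell}(\mathcal U_n)\le 2^{-n}$, and with $S$ failing to be random exactly when $S\in\Inter_n\mathcal U_n$. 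Writing $\mathcal U_n=[U_n]^\preceq$ for a uniformly c.e. sequence $U_n$, define
$$V_n=\{y\in 2^\omega:\{S:y\in\Gamma_S\}\sub\mathcal U_n\}.$$
By Lemma \ref{ce} each $V_n$ is a $\Sigma^0_1$ class, uniformly in $n$, and since the $\mathcal U_n$ are nested so are the $V_n$. The point of this definition is the equivalence: $x$ belongs to no Martin-L\"of random closed set $\Gamma_S$ if and only if $\{S:x\in\Gamma_S\}\sub\Inter_n\mathcal U_n$, which holds if and only if $x\in\Inter_n V_n$. So it suffices to prove $x\notin\Inter_n V_n$.

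The heart of the argument is bounding $\mu(V_n)$, and this is where Lemma \ref{oslmuc} enters. Let $A$ be any closed set with $A\sub V_n$. If $\Gamma_S\inter A\ne\nil$, pick $y\in\Gamma_S\inter A$; since $y\in V_n$ we have $\{S':y\in\Gamma_{S'}\}\sub\mathcal U_n$, and $y\in\Gamma_S$ then forces $S\in\mathcal U_n$. Hence $\{S:\Gamma_S\inter A\ne\nil\}\sub\mathcal U_n$, so
$$\mathbb P_{k,\ell}\{\Gamma:\Gamma\inter A\ne\nil\}=\lambda_{k,\ell}\{S:\Gamma_S\inter A\ne\nil\}\le\lambda_{k,\ell}(\mathcal U_n)\le 2^{-n}.$$
On the other hand Lemma \ref{oslmuc} gives $\mathbb P_{k,\ell}\{\Gamma:\Gamma\inter A\ne\nil\}\ge\mu(A)^2/c$. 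Combining the two yields $\mu(A)\le\sqrt{c}\,2^{-n/2}$ for every closed $A\sub V_n$. Since $V_n$ is open, Lemma \ref{Jan1-08} lets me approximate $\mu(V_n)$ from inside by clopen (hence closed) subsets, and letting the approximation error tend to $0$ gives $\mu(V_n)\le\sqrt{c}\,2^{-n/2}$.

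It remains to repackage $(V_n)$ as a genuine test. Fix an integer $a\ge\log_2 c$ and set $W_m=V_{2m+a}$; then $\mu(W_m)\le\sqrt{c}\,2^{-(2m+a)/2}\le 2^{-m}$, the $W_m$ are a uniformly c.e. sequence of open sets, and $\Inter_m W_m=\Inter_n V_n$ by nesting. Thus $(W_m)$ is a Hippocrates $\mu$-test. Since $x$ is Hippocrates $\gamma$-energy random via $\mu$, it passes this test: $x\notin\Inter_m W_m=\Inter_n V_n$. By the equivalence established above, $x$ belongs to some Martin-L\"of random closed set under the $(k,\ell)$-induced distribution, as desired.

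The main obstacle is the measure estimate on $V_n$: one must see that the energy bound of Lemma \ref{oslmuc}, which is a \emph{lower} bound on the hitting probability, can be played against the \emph{upper} bound $\lambda_{k,\ell}(\mathcal U_n)\le 2^{-n}$ coming from the universal test, the bridge between them being precisely the implication $\Gamma_S\inter A\ne\nil\Rightarrow S\in\mathcal U_n$ valid for $A\sub V_n$. A secondary technical point is ensuring that $(V_n)$ is genuinely c.e. without consulting $\mu$, which is exactly the content of Lemma \ref{ce} and is what makes the conclusion available from \emph{Hippocrates} (rather than full) energy randomness.
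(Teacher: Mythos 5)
Your proof is correct and takes essentially the same route as the paper's own proof: you derive the open sets $V_n$ from a universal test via Lemma \ref{ce}, play the hitting-probability lower bound of Lemma \ref{oslmuc} against the universal test's measure bound (bridged by exactly the same inclusion $\{S:\Gamma_S\inter A\ne\nil\}\subseteq\mathcal U_n$), use the clopen inner approximation of Lemma \ref{Jan1-08} to get $\mu(V_n)\le\sqrt{c}\,2^{-n/2}$, and rescale to obtain a Hippocrates $\mu$-test. The differences are only cosmetic: the paper argues the contrapositive while you argue directly from the witnessing measure $\mu$, and your explicit constant $a\ge\log_2 c$ replaces the paper's ``effective subsequence.''
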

\begin{proof}
Let $x$ be any real and suppose $x$ belongs to no Martin-L\"of random closed set according to $\mathbb P_{k,\ell}$. That is, $\{S: x\in\Gamma_{S}\}$ contains no set $S$ that is Martin-L\"of random according to the $(k,\ell)$-distribution. Thus if we let $V_n$ be a universal Martin-L\"of test for $\lambda_{k,\ell}$ then $\{S: x\in\Gamma_{S}\}\subseteq \inter_n [V_n]^\preceq$. So $x\in\inter_{n<\omega}[U_n]^\preceq$, where $$[U_n]^\preceq=\{y\in 2^\omega:\{S:y\in\Gamma_{S}\}\subseteq [V_n]^\preceq\}.$$

We have already seen in Lemma \ref{ce} that $[U_n]^\preceq$ is $\Sigma^0_1$, and by the proof $\{[U_n]^\preceq:n<\omega\}$ is even $\Sigma^0_1$ uniformly in $n$. Thus we may choose $U_n$, $n<\omega$ as uniformly c.e. sets of strings. 
To show $x$ is not $\gamma$-energy random, we must show that for any $\mu$ with $\iint\frac{d\mu(a)d\mu(b)}{\upsilon(a,b)^\gamma}<\infty$, $x$ is not $\mu$-random. We do this by showing that some effectively given subsequence of $\{U_n\}_{n<\omega}$ is a $\mu$-Martin-L\"of test. 

By Lemma \ref{Jan1-08}, for each $\epsilon>0$ there is a clopen set $A\subseteq [U_n]^\preceq$ with $\mu A\ge \mu [U_n]^\preceq-\epsilon$.

Since $$\{S\subseteq K^{<\omega}:\Gamma_{S}\inter [U_n]^\preceq\ne\nil\}=\bigcup_{y\in [U_n]^\preceq} \{S: y\in\Gamma_{S}\}\subseteq [V_n]^\preceq,$$
it follows from Lemma \ref{oslmuc} that $$\frac{\mu(A)^2}{c}\le \lambda_{k,\ell}\{S:\Gamma_{S}\inter A\ne\nil\}\le \lambda_{k,\ell} [V_n]^\preceq\le 2^{-n}$$
so $\mu([U_n]^\preceq)\le \mu(A)+\eps\le \sqrt{c2^{-n}}+\eps$. Since $\eps>0$ was arbitrary, $\mu([U_n]^\preceq)\le \sqrt{c2^{-n}}$. After taking an effective subsequence we can replace $\sqrt{c2^{-n}}$ by $2^{-n}$, and so we are done.
\end{proof}

\begin{thm}\label{anybias}
Each member of any Martin-L\"of random closed set under the $(k,\ell)$-induced distribution is truth-table equivalent to an infinite subset of a Martin-L\"of random set under $\lambda_{k,\ell}$. 
\end{thm}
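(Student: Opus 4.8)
The plan is to unwind the definitions of $\Gamma_S$ and of Martin-L\"of randomness according to the $(k,\ell)$-induced distribution, at which point the witnessing subset and both reductions almost write themselves; the only genuine work is to verify that the reductions are truth-table (bounded) rather than merely Turing.

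By Definition \ref{Oct29Eve} I would fix a set $S\subseteq K^{<\omega}$ witnessing that the given random closed set $C$ is Martin-L\"of random according to $\mathbb P_{k,\ell}$: that is, $C=\Gamma_S$ and $f_k^{-1}(S)$ is Martin-L\"of random with respect to Bernoulli measure with parameter $2^{-\ell}$, so that $S$ is itself a Martin-L\"of random set under $\lambda_{k,\ell}$. Given $x\in C=\Gamma_S$, the definition of $\Gamma_S$ supplies a sequence $z\in K^\omega$ with $\iota(z)=x$ and $z\restrict n\in S$ for every $n$; moreover $z$ is \emph{unique}, since $\iota:K^\omega\to 2^\omega$ is a block-by-block decodable bijection, so in fact $z=\iota^{-1}(x)$ is computed from $x$ with no recourse to any oracle. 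The witnessing subset should then be $B=\{z\restrict n:n\in\omega\}$, the set of prefixes of $z$: it is infinite (exactly one string of each length) and, by the displayed condition, $B\subseteq S$. Thus $B$ is an infinite subset of the Martin-L\"of random set $S$ under $\lambda_{k,\ell}$, and it remains to show $x\equiv_{\mathrm{tt}}B$, where $B$ is identified with $f_k^{-1}(B)\subseteq\omega$.

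For $x\leq_{\mathrm{tt}}B$, to compute $x(j)$ I set $i=\lfloor j/k\rfloor$ and locate the unique string of $B$ of length $i+1$ by querying $B$ at all $\sigma\in K^{i+1}$ (a bounded, total search that succeeds because $B$ meets each length in exactly one string); this string is $z\restrict(i+1)$, whence $x\restrict k(i+1)=\iota(z\restrict(i+1))$ is determined. For $B\leq_{\mathrm{tt}}x$, membership of $m$ in $f_k^{-1}(B)$ is decided by computing $\sigma=f_k(m)$ and checking $\sigma=z\restrict|\sigma|$, which reads only the first $k|\sigma|$ bits of $x$. Both procedures halt on every oracle with a use bounded in advance, so each is a genuine truth-table reduction, correct when fed the actual $B$ or $x$.

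The step I expect to require the most care is precisely this truth-table (as opposed to Turing) bookkeeping: the reduction $x\leq_{\mathrm{tt}}B$ naively looks like an unbounded search for ``the'' length-$(i+1)$ string of $B$, and it is the combinatorial fact that $B$ meets each length in exactly one string --- a consequence of $B$ being a chain of prefixes --- that converts this into a bounded query. Everything else is definitional; in particular no randomness of $B$ itself is needed, only the inclusion $B\subseteq S$ together with the randomness of $S$ that is handed to us by the hypothesis.
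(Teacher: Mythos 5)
Your proposal is correct and follows essentially the same route as the paper: your set $B=\{z\restrict n:n\in\omega\}$ coincides with the paper's witness $Y=\{\sigma\in S:\iota(\sigma)\subseteq x\}$ (the chain of $\iota^{-1}$-decoded prefixes of $x$, which lies in $S$ because $x\in\Gamma_S$), and your two reductions are exactly the paper's equivalences $\tau\subseteq x\Iff\iota^{-1}(\tau)\in Y$ and $\sigma\in Y\Iff\iota(\sigma)\subseteq x$. Your extra care about totality of the reductions on arbitrary oracles (the default-output convention making the bounded search genuinely truth-table) is a point the paper leaves implicit, but it is the same argument.
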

\begin{proof}
Let $x\in \Gamma_{S}$ where $S$ is Martin-L\"of random under $\lambda_k$. Let $Y=\{ \sigma\in S: \iota(\sigma)\subseteq x\}$ where $\sigma\subseteq x$ denotes that $\sigma$ is an initial segment of $x$. Since $x\in \Gamma_{S}$, $Y$ is infinite. Since $\tau\subseteq x\Iff \iota^{-1}(\tau)\in Y$ for $\tau$ of length a multiple of $k$, and since $\sigma\in Y\Iff \iota(\sigma)\subseteq x$, $x$ is truth-table equivalent to $Y$.
\end{proof}
 
 \begin{thm}\label{main}
 Each real of positive effective Hausdorff dimension computes (and in fact is truth-table equivalent to) an infinite subset of a Martin-L\"of random set of integers.
 \end{thm}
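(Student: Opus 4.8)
The plan is to chain together the results already in place, so that the substance of the proof reduces to a careful choice of the parameters $k$ and $\ell$. Fix a real $x$ with $\dim^1_H x = d > 0$. By Reimann's Theorem \ref{Reimann}, $x$ is $\beta$-capacitable for every $\beta < d$, so I can fix a $\beta$ with $0 < \beta < d$ and obtain a Borel probability measure $\mu$ for which $x$ is $\mu$-random and $\mu(\sigma) \le c\,2^{-\beta|\sigma|}$ for all $\sigma$ and some constant $c$.

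Next I would choose the alphabet. Since $d \le 1$ forces $\beta < 1$, I take $\ell = 1$ and an integer $k \ge 2$ large enough that $\gamma := 1/k < \beta$; then $\gamma = \ell/k$ has the form required by Theorems \ref{members} and \ref{anybias}, and $K = 2^k$. The reason for setting $\ell = 1$ is that, by the remark following Definition \ref{Oct29Eve}, a closed set is Martin-L\"of random under the $(k,1)$-induced distribution exactly when it is of the form $\Gamma_S$ with $f_k^{-1}(S)$ a fair-coin Martin-L\"of random subset of $\omega$; thus $\lambda_{k,1}$-randomness is genuine Martin-L\"of randomness of integers under the $1/2$-Bernoulli (fair-coin) measure, which is what the target asks for.

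The key link is Lemma \ref{Oct29-2008}: since $\beta > \gamma$ and $\mu(\sigma) \le c\,2^{-\beta|\sigma|}$, the same $\mu$ has finite $\gamma$-energy, $\iint \upsilon(a,b)^{-\gamma}\,d\mu(b)\,d\mu(a) < \infty$. As $x$ is $\mu$-random for this $\mu$, $x$ is $\gamma$-energy random, and \emph{a fortiori} Hippocrates $\gamma$-energy random, because the c.e. Hippocrates tests form a subclass of the tests c.e.\ relative to $\mu$, so passing every $\mu$-relative test entails passing every Hippocrates test. Applying Theorem \ref{members} with these $k,\ell$, the real $x$ belongs to some Martin-L\"of random closed set under the $(k,1)$-induced distribution, and Theorem \ref{anybias} then gives that $x$ is truth-table equivalent to an infinite subset of a Martin-L\"of random set under $\lambda_{k,1}$, i.e.\ of a fair-coin Martin-L\"of random set of integers. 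This delivers both the computation and the stronger truth-table equivalence claimed.

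I expect no serious obstacle here, as each step merely invokes a prior result; the single point requiring care is the parameter bookkeeping, namely ensuring that the capacitability exponent $\beta$ furnished by Reimann's theorem can be squeezed strictly between $\gamma = 1/k$ and $\dim^1_H x$. This is precisely why I take $\gamma$ of the special form $1/k$ with $\ell = 1$ and $k$ large, rather than an arbitrary rational $\ell/k$: it simultaneously makes $\gamma$ small enough to apply Lemma \ref{Oct29-2008} and keeps the resulting random set a fair-coin random set of integers.
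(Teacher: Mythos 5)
Your proposal is correct and follows essentially the same route as the paper's own proof: Reimann's Theorem \ref{Reimann} supplies a capacitability exponent, Lemma \ref{Oct29-2008} converts this to finite $\gamma$-energy for $\gamma=1/k<\beta$, and Theorems \ref{members} and \ref{anybias} with $\ell=1$ then yield the truth-table equivalent infinite subset, pulled back to a subset of $\omega$ via $f_k$. The only cosmetic difference is the order of quantification---you fix $\beta<\dim^1_H x$ first and then choose $k$ with $1/k<\beta$, whereas the paper fixes $k$ with $1/k<\dim^1_H x$ and then extracts $\beta>1/k$ from the supremum---which is immaterial.
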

 \begin{proof}
Let $x$ be a real of positive effective Hausdorff dimension, let $\ell=1$ and let $k$ be such that $\dim^1_H(x)>\gamma:= \frac{\ell}{k}=\frac{1}{k}$. By Theorem \ref{Reimann}, $x$ is $\beta$-capacitable for some $\beta>\gamma$. By Lemma \ref{Oct29-2008}, $x$ is $\gamma$-energy random and hence Hippocrates $\gamma$-energy random. By Theorem \ref{members}, $x$ is a member of a Martin-L\"of random closed set under the $(k,\ell)$-induced distribution. By Theorem \ref{anybias}, $x$ is Turing equivalent to an infinite subset $Y$ of a Martin-L\"of random set under $\lambda_{k,\ell}$. Now $\lambda_{k,\ell}$ is the uniform distribution on subsets of $K^{<\omega}$ with success probability $2^{-\ell}=\frac{1}{2}$. Thus $f_k^{-1}(Y)$ is an infinite subset of a Martin-L\"of random set of integers, and truth-table equivalent to $Y$. \end{proof}

\begin{thm}\label{Main}
There is an infinite subset of a Martin-L\"of random set of integers that computes no Martin-L\"of random set of integers.
\end{thm}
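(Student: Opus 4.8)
The plan is to combine Theorem \ref{main} with the theorem of Miller \cite{M} previewed in the introduction, obtaining the result as essentially a one-line corollary of the machinery already developed. Miller's theorem provides a single real $x \in 2^\omega$ with $\dim^1_H(x) = \frac{1}{2}$ that computes no Martin-L\"of random set of integers. The specific value $\frac{1}{2}$ is irrelevant for us; all that matters is that $x$ has strictly positive effective Hausdorff dimension while computing no Martin-L\"of random set.

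Given such an $x$, I would apply Theorem \ref{main}. Since $\dim^1_H(x) > 0$, that theorem yields an infinite subset $Y$ of a Martin-L\"of random set of integers with $x \equiv_{tt} Y$; in particular $x \equiv_T Y$, so $x$ and $Y$ compute exactly the same subsets of $\omega$. I claim $Y$ is the desired witness. Indeed, if $Y$ computed some Martin-L\"of random set $R$, then from $R \leq_T Y \leq_T x$ we would obtain $R \leq_T x$, i.e.\ $x$ would compute $R$, contradicting Miller's choice of $x$. Hence $Y$ computes no Martin-L\"of random set of integers, while remaining an infinite subset of a Martin-L\"of random set, as required.

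The main obstacle has, in effect, already been overcome before reaching this statement: the real content lies in Theorem \ref{main} (and the probabilistic lemmas feeding into it) together with Miller's construction, and the present argument merely splices the two together. The only point requiring any care is that the reduction furnished by Theorem \ref{main} be at least a Turing equivalence, so that the property ``computes no Martin-L\"of random set'' transfers between $x$ and $Y$; the truth-table equivalence recorded there is comfortably more than enough for this.
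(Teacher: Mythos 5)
Your proposal is correct and is essentially identical to the paper's own proof: invoke Miller's real $x$ of effective Hausdorff dimension $1/2$ computing no Martin-L\"of random set, apply Theorem \ref{main}, and use the equivalence to transfer the non-computing property to the subset $Y$. Your write-up is in fact slightly more explicit than the paper's, which leaves the direction $Y\le_T x$ (needed to conclude $Y$ computes no random set) implicit in the truth-table equivalence stated in Theorem \ref{main}.
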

\begin{proof}
Miller \cite{M} 
shows that there is a real $x$ of effective Hausdorff dimension 1/2 which does not compute any Martin-L\"of random real. By Theorem \ref{main}, $x$ computes an infinite subset of a Martin-L\"of random set.
\end{proof}

We only needed the case $\ell=1$ for Theorem \ref{Main}. If we let $\ell\to\infty$, we see that even a Martin-L\"of random set of integers with respect to an arbitrarily small probability of membership $2^{-\ell}$ may have an infinite subset that computes no Martin-L\"of random set.

\section*{Acknowledgments}

The author was partially supported by NSF grant DMS-0652669, and thanks Soumik Pal for pointing out the existence of the work of Hawkes and Lyons \cite{Hawkes} \cite{Lyons} and its exposition by M\"orters and Peres \cite{MP}.
\bibliographystyle{plain}
\bibliography{MRL-Kjos-Hanssen-postReview-refs}

\end{document}